\DeclareSymbolFont{cyrletters}{OT2}{wncyr}{m}{n}
\DeclareMathSymbol{\Sha}{\mathalpha}{cyrletters}{"58}
\theoremstyle{definition}
\newtheorem{theorem}{Theorem}[section]
\numberwithin{equation}{theorem}
\newtheorem{definition}[theorem]{Definition}
\newtheorem{proposition}[theorem]{Proposition}
\newtheorem{remark}[theorem]{Remark}
\newcommand{\Z}{\mathbb{Z}}
\newcommand{\C}{\mathbb{C}}
\newcommand{\Q}{\mathbb{Q}}
\newcommand{\F}{\mathbb{F}}
\newcommand{\abs}[1]{\lvert #1 \rvert} 
\newcommand{\SL}{\mathrm{SL}}
\newcommand{\GL}{\mathrm{GL}}
\newcommand{\mf}[1]{\mathfrak{#1}}
\newcommand{\p}{\mf{p}}
\newcommand{\q}{\mf{q}}
\newcommand{\Mod}[1]{\ (\mathrm{mod}\ #1)}
\newcommand{\demph}[1]{\textbf{#1}}
\renewcommand{\L}{\mathcal{L}}
\newcommand{\Gal}{\mathrm{Gal}}
\newcommand{\Kbar}{\overline{K}}
\newcommand{\im}{\mathrm{im}}
\newcommand{\unr}{\mathrm{unr}}
\newcommand{\Sel}{\mathrm{Sel}}
\newcommand{\no}{\mathrm{NO}}
\newcommand{\rel}{\mathrm{rel}}
\newcommand{\bk}{\mathrm{BK}}
\newcommand{\Hom}{\mathrm{Hom}}
\newcommand{\SD}{\mathrm{SD}}
\newcommand{\rank}{\mathrm{rank}}
\newcommand{\OO}{\mathcal{O}}
\newcommand{\LMRefDB}[2]{\href{#1}{\texttt{#2}}}
\newcommand{\spc}{\hspace{1pc}}
\title[Computing mod $p$ Selmer groups]{Computing Selmer groups associated to mod $\boldsymbol{p}$ Galois representations}
\author{Lewis Combes}
\date{\today}
\begin{document}
	
	\begin{abstract}
		We present methods to compute Selmer groups associated to mod $p$ Galois representations $\rho$ over a number field $K$, with a particular focus on comparing their ranks with periods coming from cohomology classes associated to $\rho$ by Serre's conjecture. This provides evidence for a loose version of a ``mod $p$ Bloch-Kato conjecture'', where the vanishing of a period is predicted to capture the presence of rank in a Selmer group. Our methods are explicit, and implemented in Magma. 
	\end{abstract}
	
	\maketitle
	\tableofcontents
	
	\section{Introduction}
	
	\subsection{Motivation}
	Selmer groups are objects of fundamental importance in modern number theory. They are associated to Galois representations, and conjectured to contain important arithmetic information. Robust methods for computing with Selmer groups are useful in formulating and verifying these conjectures. 
	
	We are motivated by the question of Calegari and Venkatesh, posed in Section 10.3 of their book \cite{CalegariVenkatesh}: 
	\begin{quote}
		Do periods of torsion classes detect classes in Galois cohomology?
	\end{quote}
	This question asks, loosely, if there is a relationship between the rank of a Selmer group and a \textit{period} associated to a class in the mod $p$ cohomology of an arithmetic group. This relationship should be of the form 
	\begin{equation*}
		\text{period} = 0 \iff \text{rank} > 0.
	\end{equation*}
	The purpose of this paper is to develop methods to compute the ranks of various mod $p$ Selmer groups, namely the \textit{relaxed}, \textit{nearly-ordinary} and \textit{unramified} groups (see \cref{SectionGaloisCohomology}), and use these to test the question of Calegari-Venkatesh. Our main result is the computation presented in \cref{SectionBKTest}, that a weaker version of this relationship may hold for the nearly-ordinary Selmer group. 
	
	The relationship between periods and ranks is a direct generalisation of the Bloch-Kato conjecture for $p$-adic Galois representations $\rho$, which associates to $\rho$ an $L$-function $L(\rho,s)$ and a Selmer group $\Sel_{\mathrm{BK}}(\rho)$, and predicts the relationship 
	\begin{equation}\label{EquationBlochKato}
		L(\rho,1) = 0 \iff \rank(\Sel_{\bk}(\rho)) > 0.
	\end{equation}
	In fact, the relationship has some extra subtleties (see Bloch-Kato \cite{BlochKato} for further details), but this is the central principle we wish to test. 
	
	There is not currently a robust choice for a mod $p$ equivalent of the Bloch-Kato Selmer group, and so we use the nearly-ordinary group to formulate a more oblique version of this relationship in the mod $p$ setting. In the $p$-adic setting, the nearly-ordinary Selmer group over-estimates the rank Bloch-Kato Selmer group by at most $1$; per our computations, the same may hold in this case, with periods being used to predict the rank of a hypothetical ``mod $p$ Bloch-Kato Selmer group'' that is also overestimated by at most $1$. 
	
	We have developed code in \textsf{Magma} to perform all our computations, which is available in an associated GitHub repository \cite{CombesGitHub}. It is our hope that it will be useful to others for further refining these conjectures. During the preparation of this work, a paper by Etienne \cite{Etienne} was released, also describing a method to compute Selmer groups associated to mod $p$ Galois representations, although with a different approach.

	\subsection{Structure of the paper}
	In \cref{SectionPreliminaries}, we introduce standard concepts from the representation theory of infinite Galois groups. In \cref{SectionGaloisCohomology,SectionCohoToCFT}, we define Selmer groups for mod $p$ representations and connect them to abelian number field extensions. This provides the backbone of our approach to computing Selmer groups, using class field theory. In \cref{SectionTechnical} we note some technical aspects associated to the computations, including proving that the ranks we compute are independent of the choices required along the way. We illustrate the method with examples in \cref{SectionExamples}, and give some statistics on computations with these groups. In \cref{SectionBKTest}, we give computational evidence in favour of a relationship between the nearly-ordinary Selmer group and a hypothetical ``mod $p$ version'' of the Bloch-Kato Selmer group. 
	
	Our primary source of Galois representations will be as the $p$-torsion points of elliptic curves, data for which we obtain from the LMFDB \cite{LMFDB}. We make frequent use of code by Sutherland \cite{SutherlandTorsion} to compute the number fields giving these representations.

	\subsection{Acknowledgments}
	The bulk of this work was completed during the author's Ph.D. studies at the University of Sheffield, while in receipt of EPSRC grant EP/R513313/1. Much of the content of this paper appears in the author's thesis, \textit{Periods and Selmer groups associated to mod $p$ Galois representations over imaginary quadratic fields}, albeit with different words in different orders. We would like to thank Haluk \c{S}eng\"{u}n for suggesting and supervising this Ph.D. project, and for all his guidance and support during its completion. The remainder of this work was done at the University of Sydney, under the Australian Research Council Laureate Fellowship FL230100256 grant of Geordie Williamson. We are grateful to acknowledge this funding, and the support of the Sydney Mathematics Research Institute. We are also grateful to John Voight for helpful conversations and comments, and H\aa vard Damm-Johnsen for comments on an earlier draft.

	\section{Preliminaries and notation}\label{SectionPreliminaries}

	In this section we list the basic constructions we will use throughout. The notational choices we make here carry over to the rest of the paper, unless otherwise specified. That is, $K$ will always denote a number field, $\rho$ a mod $p$ Galois representation, and so on, unless we need these symbols for something else, which will be stated. All constructions of infinite Galois theory we use are standard, and can be found in many textbooks, e.g. ~Chapter 2 of Koch \cite{KochGalois}. 	
	
	Let $K$ be a number field. We fix an algebraic closure $\Kbar$ of $K$, and write $G_K = \Gal(\Kbar/K)$ for the absolute Galois group of $K$. The standard construction of $\Gal(\Kbar/K)$ is via an inverse limit: for two Galois extensions $F_1/K$, $F_2/K$ with $F_2 \subset F_1$, we have a map $\Gal(F_1/K) \to \Gal(F_2/K)$ given by restriction; this forms an inverse system, the limit of which is $\Gal(\Kbar/K)$. 	
	
	Let $\rho \colon G_K \to \GL(V)$ be a finite-dimensional mod $p$ Galois representation (so the vector space $V$ is isomorphic to $\F_{q}^n$ for some finite field $\F_q$ of characteristic $p$). Further, assume that $\rho$ is a continuous representation, with respect to the Krull topology on $G_K$ and the discrete topology on $V$. Then $\im(\rho)$ is a finite group, and $\rho$ factors through a finite Galois extension of number fields $L/K$: 
	\begin{equation*}
		\im(\rho) \simeq G_K/\ker(\rho) \simeq \Gal(L/K), \spc L = \Kbar^{\ker(\rho)}.
	\end{equation*}
	
	Note that the Galois correspondence for $G_K$ tells us that $\ker(\rho) = \Gal(\overline{L}/L)$, which we will denote by $G_L$.

	We will write $\p$ to denote a place of $K$, not necessarily finite and not necessarily over the characteristic $p$ of $V$. For each $\p$, we fix a place $\mathfrak{P}$ of $\Kbar$; in effect, this makes a choice of place lying over $\p$ for each finite Galois extension $F/K$ by \textit{restriction}: take a representative absolute value in the equivalence class $\mf{P}$ and restrict it to the subfield $F$; its equivalence class is a place of $F$, denoted $\mf{P}\vert_{F}$. 
	
	The choice of $\mf{P}$ is \textit{compatible} with $G_K$ in the following sense: for a tower $F_1/F_2/K$ of finite extensions, with $F_1/K$ and $F_2/K$ Galois, we have $(\mf{P}\vert_{F_1}) \vert_{F_2} = \mf{P} \vert_{F_2}$.

	We denote by $\q$ the choice of place this fixes in $L$, i.e. $\q = \mf{P}\vert_{L}$. For each field $F = K,L,\overline{K}$ and its respective place $I = \p, \q, \mf{P}$, we have the following: the completion $F_I$, its ring of integers $\OO_{F_I}$, the maximal ideal $\mf{m}_{I}$ of this ring, and its residue field $\F_{I} = \OO_{F_I} / \mf{m}_I$. Then we have $D_\p = \Gal(\Kbar_{\mf{P}}/K_{\p})$, the \demph{decomposition group at $\p$}. When $\p$ is finite, we have the \demph{inertia subgroup} 
	\begin{equation*}
		I_{\p} = \ker( \Gal(\Kbar_{\mf{P}} / K_{\p}) \to \Gal(\F_{\mf{P}} / \F_{\p}) ) \leq D_{\p}. 
	\end{equation*}
	When $\p$ is infinite, one defines the inertia subgroup $I_{\p} = D_{\p}$. 	
	
	We also have an inertia subgroup associated to $\q$, which we will use in \cref{SectionCohoToCFT}. It is given by 
	\begin{equation*}
		I_{\q} = \ker( \Gal(\Kbar_{\mf{P}} / L_{\q}) \to \Gal(\F_{\mf{P}} / \F_{\q}) ).
	\end{equation*}
	
	The decomposition and inertia subgroups of $G_K$ have their realisations in the finite Galois group $\Gal(L/K) \simeq G_K/G_L$, which are the standard groups from finite Galois theory: 
	\begin{equation*}
		D_{\p}(L/K) = \{ \sigma \in \Gal(L/K) \mid \sigma(\q) = \q \},
	\end{equation*}
	\begin{equation*}
		I_{\p}(L/K) = \{ \sigma \in \Gal(L/K) \mid \sigma(x) - x \in \q ~ \text{ for all } x \in \OO_L \}.
	\end{equation*}
	We can also write 
	\begin{equation}\label{EquationInfiniteToFiniteDecomp}
		D_{\p}(L/K) \simeq \rho(D_{\p}) \simeq D_{\p}/(D_{\p} \cap G_L),
	\end{equation}
	\begin{equation}\label{EquationInfiniteToFiniteInertia}
		 I_{\p}(L/K) \simeq \rho(I_{\p}) \simeq I_{\p}/(I_{\p} \cap G_L)
	\end{equation}
	by the first isomorphism theorem.

	The groups $D_{\p}$, $I_{\p}$ (and their finite realisations) are all only defined up to the choice of place $\mf{P}$ in $\Kbar$ (for the finite groups, the place $\q$ of $L$). We will see later that working with respect to a chosen fixed place in $L$ gives ranks that are independent of this choice. In the course of proving this, we will need to refer to the decomposition or inertia groups given by a \textit{specific choice} of place $\q$ in $L$; in these instances, we will write them as $D_{\p}^{\q}(L/K)$ and $I_{\p}^{\q}(L/K)$.

	Finally, we will also make extensive use of the following property of some two-dimensional representations. 
	
	\begin{definition}
		A representation $\rho \colon G_K \to \GL_2(\F)$ is called \demph{nearly-ordinary at $\p$} if $\rho\vert_{D_{\p}}$ fixes a one-dimensional subset $\ell$ (a line) in $V$ (as a set). That is, there is an exact sequence 
		\begin{equation*}
			0 \to \ell \to V \to V/\ell \to 0
		\end{equation*}
		of $D_{\p}$-modules of dimensions $1$, $2$ and $1$.
	\end{definition}
	
	\begin{remark}
		The terms ``ordinary'' and ``nearly-ordinary'' are not standard across the literature, with both referring to Galois representations having a filtration with varying extra technical conditions. In particular, some authors require nearly-ordinary representations to satisfy that the action of the quotient representation $\rho^*$ on $V/\ell$ is unramified (i.e. trivial on inertia). We make no such imposition, only requiring the existence of the filtration. However, we will examine the difference between the ramified and unramified quotients of a nearly-ordinary representation in \cref{SectionBKTest}. 
	\end{remark}
	
	Our interest in the nearly-ordinary Selmer group comes from its connection to the Bloch-Kato Selmer group in the $p$-adic case. For $\rho$ the $p$-adic Galois representation associated to an elliptic curve $E$ satisfying 
	\begin{enumerate}
		\item $\rho$ is nearly-ordinary (in our sense),
		\item the quotient representation $\rho^*$ is unramified at $p$,
	\end{enumerate}
	one can associate the Greenberg Selmer group $\Sel_{\mathrm{Gr}}(\rho)$ (for us, the nearly-ordinary Selmer group associated to the unramified quotient). One then has 
	\begin{equation*}
		\rank(\Sel_{\mathrm{Gr}}(\rho)) = \rank(\Sel_{\mathrm{BK}}(\rho)) + \begin{cases}
			1 & L(\rho,s) \text{ has a trivial zero} \\ 
			0 & \text{else}.
		\end{cases}
	\end{equation*}
	That is, the Greenberg Selmer group overestimates the rank of the Bloch-Kato Selmer group by at most $1$.

	\section{Galois cohomology and Selmer groups}\label{SectionGaloisCohomology}

	\begin{definition}
		A \demph{local condition at $\boldsymbol{\p}$} is a choice of subspace $L_{\p} \leq H^1(D_{\p},V)$. The \demph{unramified condition} is the subspace 
		\begin{equation*}
			H^1_{\unr}(D_{\p},V) \coloneqq \ker\left( H^1(D_{\p},V) \to H^1(I_{\p},V) \right).
		\end{equation*}
		A \demph{Selmer system} $\L = \{ L_{\p} \mid \p \text{ a place of } K \}$ is a choice of local condition for each place of $K$, such that $L_{\p} = H^1_{\unr}(D_{\p},V)$ for all but finitely many $\p$. 
	\end{definition}
	
	\begin{definition}
		The \demph{Selmer group} associated to a given Selmer system $\L$ is the group 
		\begin{equation*}
			\Sel_{\L}(\rho) = \ker\left( H^1(G_K,V) \to \prod_{\p} \frac{H^1(D_{\p},V)}{L_{\p}} \right).
		\end{equation*}
		That is, the Selmer group is all elements of $H^1(G_K,V)$ that  simultaneously satisfy all local conditions. 
	\end{definition}
	
	\begin{definition}
		The \demph{Bloch-Kato Selmer group} associated to a $p$-adic Galois representation $V$ is defined by the Selmer system 
		\begin{equation*}
			L_{\p} = \begin{cases}
			H^1_{\unr}(D_\p,V) & \p \nmid p \\ 
			\ker(H^1(D_\p,V) \to H^1(D_\p,V \otimes_{\Q_p} B_{\mathrm{crys}}) ) & \p \mid p,
			\end{cases}
		\end{equation*}
		where $B_{\mathrm{crys}}$ is one of Fontaine's period rings, see e.g. ~Bloch-Kato \cite{BlochKato}. 
	\end{definition}
	While we will not focus on $p$-adic Galois representations, we will use the structure of this definition to guide our definitions of Selmer groups for mod $p$ Galois representations. In particular, we will take the unramified condition at all places not over the characteristic $p$. 

	It is also standard to impose no conditions at the infinite places, which we will do throughout. That is, for $\p$ infinite, we define $L_{\p} = H^1(D_{\p},V)$, so that every class in $H^1(G_K,V)$ automatically satisfies the condition. 
	
	The main object of our interest will be the \textit{nearly-ordinary} Selmer group. 
	\begin{definition}
		The \demph{nearly-ordinary condition} is the local condition 
		\begin{equation*}
			H^1_{\no}(D_{\p},V) \coloneqq \ker\left( H^1(D_{\p},V) \to H^1(I_{\p},V/\ell) \right). 
		\end{equation*}
	\end{definition}

	\begin{definition}
		The \demph{relaxed}, \demph{nearly-ordinary} and \demph{unramified Selmer systems} are defined by 
		\begin{equation*}
			\L_{*} = \begin{cases}
				H^1_{\unr}(D_{\p},V) & \p \nmid p\\ 
				H^1_{*}(D_{\p},V) & \p \mid p .
			\end{cases}
		\end{equation*}
		for $* \in \{ \rel, \no, \unr \}$, with $H^1_{\rel} = H^1(D_{\p},V)$. 
	\end{definition}
	It follows that 
	\begin{equation*}
		\Sel_{\unr}(\rho) \subseteq \Sel_{\no}(\rho) \subseteq \Sel_{\rel}(\rho).
	\end{equation*}
	The inclusion $\no \subseteq \rel$ should be obvious, while $\unr \subseteq \no$ may be less so; note that any cocycle class in $H^1(D_{\p},V)$ vanishing in $H^1(I_{\p},V)$ will also vanish in $H^1(I_{\p},V/\ell)$.

	\section{Cohomology to class field theory}\label{SectionCohoToCFT}
	In this section, we will reinterpret $\Sel_{\L}(\rho)$ in terms of certain Galois extensions of $L$. By translating the local conditions defining $\L$ into the Galois setting, we will give an algorithm to compute the rank of $\Sel_{\L}(\rho)$ for each of $\L= \L_{\rel}, \L_{\no}, \L_{\unr}$. To do this, we will associate to a cocycle class in $H^1(G_K,V)$ a certain homomorphism $f \colon G_L \to V$. Then the field $M_f = \overline{L}^{\ker(f)}$ will give a Galois extension of $L$, whose local properties correspond to the local properties of the original class in $H^1(G_K,V)$. 
	
	\subsection{Inflation-restriction}
	The translation into Galois theory is via the \textit{inflation-restriction} sequence in group cohomology. 
	\begin{definition}
		Let $G$ be a group, $N\triangleleft G$, and $M$ a $G$-module. The \demph{inflation-restriction} sequence is the exact sequence of cohomology groups
		\begin{equation}\label{EquationInflationRestriction}
			0 \to H^1(G/N,M^N) \to H^1(G,M) \to H^1(N,M)^{G/N} \to H^2(G/N,M).
		\end{equation}
	\end{definition}
	Setting $G = G_K$, $N = G_L$, and $M = V$ gives the exact sequence 
	\begin{equation*}
		0 \to H^1(\Gal(L/K),V) \to H^1(G_K,V) \to H^1(G_L,V)^{\Gal(L/K)} \to H^2(\Gal(L/K),V).
	\end{equation*}

	\begin{remark}\label{RemarkActionOnCohomology}
		The third term in the sequence in \cref{EquationInflationRestriction} is $H^1(N,M)^{G/N}$. The action of $G/N$ on the cohomology group can be defined at the level of cocycles: if $f \in Z^1(N,M)$ and $[g] \in G/N$, then 
		\begin{equation}\label{EquationCocycleAction}
			([g] \cdot f)(n) = g^{-1} \cdot f(gng^{-1}).
		\end{equation}
		A short calculation shows the resulting cohomology class is independent of the choice of representative of the class $[g]$. With our choices of $G$, $N$ and $M$, we have the cohomology group 
		\begin{equation*}
			H^1(G_L,V) = \Hom(G_L,V),
		\end{equation*}
		since the action of $G_L = \ker(\rho)$ on $V$ is trivial. Further, $G/N \simeq \Gal(L/K)$, so we have the space of homomorphisms $f \colon G_L \to V$ that are invariant with respect to the action of \cref{EquationCocycleAction}, i.e. homomorphisms such that 
		\begin{equation}\label{EquationHomProperty}
			f(n) = \rho(g)^{-1} f(g n g^{-1})
		\end{equation}
		for all $n \in G_L$, $g \in G_K$. 
	\end{remark}
	Note that \cref{EquationHomProperty} is equivalent to 
	\begin{equation*}
		\rho(g) f(n) = f(g n g^{-1}),
	\end{equation*}
	and so we write $\Hom_{\Gal(L/K)}(G_L,V)$ for the space of all such homomorphisms, as in this form they are equivariant with respect to the group actions on $G_L$ and $V$. There is essentially no difference between equivariance and invariance, it is just a matter of which group actions one considers. 
	
	The inflation-restriction sequence therefore becomes 
	\begin{equation*}
		\scalebox{0.98}{$0 \to H^1(\Gal(L/K),V) \to H^1(G_K,V) \to \Hom_{\Gal(L/K)}(G_L,V) \to H^2(\Gal(L/K),V).$}
	\end{equation*}
	We want to use the middle map to write classes in $H^1(G_K,V)$ as homomorphisms, which requires accounting for the finite cohomology groups $H^i(\Gal(L/K),V)$, $i=1,2$. These only depend on the specific subgroup of $\GL_n(\F_q)$ to which $\Gal(L/K)$ is isomorphic, and it is often the case that both of these groups vanish, giving an isomorphism
	\begin{equation}\label{EquationCohomologyToHoms}
		H^1(G_K,V) \simeq \Hom_{\Gal(L/K)}(G_L,V).
	\end{equation}
	We will assume that these groups always vanish, although there are certainly instances where they do not. For example, the subgroup $H = S_3 = \langle \begin{psmallmatrix}
		2 & 0 \\ 0 & 1
	\end{psmallmatrix}, \begin{psmallmatrix}
		1 & 0 \\ 1 & 1
	\end{psmallmatrix} \rangle$ in $\GL_2(\F_3)$ has $\dim_{\F_3} H^2(H,\F_3^2) = 1$. However, for all the groups we consider, both cohomology groups vanish and we really do have the isomorphism in \cref{EquationCohomologyToHoms}.

	\subsection{Homomorphisms to number fields}
	The following theorem gives a correspondence between $\F_q$-lines in $\Hom_{\Gal(L/K)}(G_L,V)$ and certain Galois extensions of $L$.
	\begin{theorem}\label{TheoremLinesToFields}
		Let $V$ be as above. Then non-trivial lines in $\Hom_{\Gal(L/K)}(G_L,V)$ are in bijection with extensions $M/L$ such that 
		\begin{enumerate}
			\item The extension $M/L$ is abelian and Galois, with $\Gal(M/L) \simeq V$ as an additive group.
			\item The extension $M/K$ is Galois.
			\item The action of $\Gal(L/K)$ on $\Gal(M/L)$ is via $\rho$. 
		\end{enumerate}
	\end{theorem}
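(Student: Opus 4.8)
The plan is to make the claimed bijection explicit and then verify it is well defined in both directions and that the two constructions are mutually inverse. Given a non-trivial line $\ell \subseteq \Hom_{\Gal(L/K)}(G_L,V)$, pick any $0 \neq f \in \ell$ and set $M_\ell = \overline{L}^{\ker(f)}$. Conversely, given an extension $M/L$ satisfying (1)--(3), pick an isomorphism $\phi\colon \Gal(M/L) \xrightarrow{\sim} V$ realising condition (3), let $\pi\colon G_L \twoheadrightarrow \Gal(M/L) = G_L/\Gal(\overline{L}/M)$ be the quotient map, and let $\ell_M$ be the line spanned by $f = \phi \circ \pi$. The content of the proof is then: these assignments do not depend on the auxiliary choices, their outputs have the asserted properties, and $\ell \mapsto M_\ell$ and $M \mapsto \ell_M$ invert one another.

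For the forward direction I would first observe that $\ker(f)$ depends only on $\ell$, since $\ker(cf) = \ker(f)$ for $c \in \F_q^{\times}$, so $M_\ell$ is unambiguous. Next, $\ker(f) \trianglelefteq G_K$: for $g \in G_K$ and $n \in \ker(f)$ the equivariance \cref{EquationHomProperty} gives $f(gng^{-1}) = \rho(g)f(n) = 0$, whence $M_\ell/K$ is Galois, which is (2); the same computation with $G_L$ in place of $G_K$ shows $M_\ell/L$ is Galois with $\Gal(M_\ell/L) = G_L/\ker(f) \simeq \im(f)$, an abelian group. To obtain (1) in full one argues that $\im(f)$, being a non-zero $\Gal(L/K)$-submodule of $V$ by equivariance, is all of $V$ when $V$ is irreducible as a $\Gal(L/K)$-module — the situation in all cases of interest (in general one restricts the bijection to surjective $f$). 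Finally, (3) falls out of the same identity: transporting the conjugation action of $\Gal(L/K) \simeq G_K/G_L$ on $\Gal(M_\ell/L) = G_L/\ker(f)$ along the induced isomorphism $\overline{f}\colon G_L/\ker(f) \xrightarrow{\sim} V$ turns it into $g \mapsto (v \mapsto \rho(g)v)$, by \cref{EquationHomProperty} once more.

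For the reverse direction the first point is that $\Gal(L/K)$ genuinely acts on $\Gal(M/L)$: because $M/K$ is Galois one may conjugate by lifts of elements of $\Gal(L/K)$ to $\Gal(M/K)$, and because $M/L$ is abelian the result is independent of the lift, so the action factors through $\Gal(M/K)/\Gal(M/L) \simeq \Gal(L/K)$. Condition (3) then supplies an equivariant isomorphism $\phi$ as above, and $f = \phi \circ \pi$ is a non-zero element of $\Hom_{\Gal(L/K)}(G_L,V)$ with $\ker(f) = \Gal(\overline{L}/M)$, so $M_{\ell_M} = M$ by the Galois correspondence. In the other order, starting from $\ell$ the isomorphism $\overline{f}$ above is an admissible choice of $\phi$ for $M_\ell$, so $\ell_{M_\ell} = \ell$; both round trips are thus purely formal once the well-definedness issues are settled.

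The step that is not formal — and which I expect to be the main obstacle — is showing that $\ell_M$ is independent of the choice of equivariant isomorphism $\phi\colon \Gal(M/L) \xrightarrow{\sim} V$, equivalently that any two such differ by multiplication by a scalar in $\F_q^{\times}$. This is precisely the statement $\mathrm{Aut}_{\Gal(L/K)}(V) = \F_q^{\times}$, i.e. $\mathrm{End}_{\Gal(L/K)}(V) = \F_q$; by Schur's lemma it holds whenever $V$ is absolutely irreducible over $\F_q$ (in particular for the $\GL_2(\F_q)$-type images coming from $p$-torsion of elliptic curves), and the same hypothesis is what forces the map $\ell \mapsto M_\ell$ to be injective. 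I would isolate this as a lemma on $\mathrm{End}_{\Gal(L/K)}(V)$ and then assemble the verifications above into the stated bijection.
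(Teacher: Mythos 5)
Your proof follows the same broad architecture as the paper's: in the forward direction you take $M_\ell=\overline{L}^{\ker(f)}$ and verify (1)--(3) via the equivariance identity \cref{EquationHomProperty} and the irreducibility of $V$; in the backward direction you construct $f$ as the composite of the quotient $G_L\twoheadrightarrow\Gal(M/L)$ with an equivariant identification $\phi\colon\Gal(M/L)\xrightarrow{\sim}V$. The paper packages the backward construction in a semi-direct-product diagram $V\rtimes G\simeq\Gal(M/K)$, but the resulting $f$ is the same composite, so the constructions agree. Where you genuinely add something is in making the bijection claim honest. The paper shows ``line $\Rightarrow$ field'' and ``field $\Rightarrow$ homomorphism'' but never addresses whether the latter gives a \emph{well-defined line}, i.e. whether two admissible identifications $\phi_1,\phi_2$ of $\Gal(M/L)$ with $V$ produce $\F_q$-proportional homomorphisms. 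You correctly isolate this as the statement $\mathrm{End}_{\Gal(L/K)}(V)=\F_q$, which by Schur is guaranteed by \emph{absolute} irreducibility of $V$, a stronger hypothesis than the plain irreducibility the paper invokes for $\im(f)=V$. (The same hypothesis is also what makes $\ell\mapsto M_\ell$ injective: if $\psi\in\mathrm{Aut}_{\Gal(L/K)}(V)\setminus\F_q^\times$ existed, then $f$ and $\psi\circ f$ would span distinct lines with $\ker(f)=\ker(\psi\circ f)$, hence the same $M$.) In the paper's examples ($\GL_2(\F_2)$, $\SD_{16}\le\GL_2(\F_3)$) absolute irreducibility holds, so the result is correct as used, but your lemma on $\mathrm{End}_{\Gal(L/K)}(V)$ is a real gap in the written argument that you have filled, and it deserves to be stated explicitly as a hypothesis of the theorem rather than left implicit.
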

	
	\begin{proof}
		This proof is almost entirely a matter of bookkeeping. Most of the hard work is in the ``forward direction''---showing that the extension coming from a homomorphism $f$ satisfies the conditions of the theorem. To perform calculations in the absolute Galois group $G_K$, we fix a section $s \colon \Gal(L/K) \to G_K$ of the quotient map $G_K \to G_K/G_L$. As noted in \cref{RemarkActionOnCohomology}, the action on cocycles (and so homomorphisms) is independent of this choice. 
		
		First, we show the ``forward direction''. Let $f \in \Hom_{\Gal(L/K)}(G_L,V)$. For point (1), recall that the associated extension $M_f/L$ is given by 
		\begin{equation*}
			M_f = \overline{L}^{\ker(f)},
		\end{equation*}
		with $\Gal(M_f/L) \simeq G_L/\ker(f) \simeq f(G_L) \leq V$. If $\alpha \in \F_q^{*}$, we have the equality $\ker(f) = \ker(\alpha f)$, so $M_{\alpha f} = M_{f}$, and the whole space $\langle f \rangle \leq \Hom_{\Gal(L/K)}(G_L,V)$ corresponds to the extension $M_{f}/L$. Assume that $\langle f \rangle$ is a 1-dimensional subspace, i.e. a line, so that $f \neq 0$. 
		
		We show that the image is the \textit{whole} of $V$ by exploiting the equivariance of $f$ (\cref{EquationHomProperty}). Taking $g \in \Gal(L/K)$ and $\tau \in G_L$, we have 
		\begin{equation}\label{EquationConjActionRho}
			\rho(s(g)) f(\tau) = f(s(g) \tau s(g)^{-1}).
		\end{equation}
		So $\im(f)$ is a $\Gal(L/K)$-submodule $V$. Since $V$ is irreducible, $\im(f) = 0$ or $\im(f) = V$. But $f \neq 0$ by assumption (the case $f=0$ corresponds to the trivial extension $M_f = L$), so $\Gal(M/L) = \im(f) = V$. 
		
		Next, we show (2), by showing $\ker(f) \triangleleft G_K$: choose $\tau \in \ker(f)$ and $\sigma \in G_K$; then $f(\sigma \tau \sigma^{-1}) = \rho(\sigma) f(\tau) = \rho(\sigma) 0 = 0$. 
		
		To prove (3), we need to show that the conjugation action of $\Gal(L/K)$ on $\Gal(M/L)$ is via $\rho$. Take $g \in \Gal(L/K)$ and $h \in \Gal(M/L)$. Then we have $s(g) \in G_K$ and $s(h) \in G_L$ such that $\rho(s(g)) = g$, $f(s(h)) = h$. In the absolute Galois groups, the conjugation action is given by 
		\begin{equation*}
			g \cdot h = f(s(g) s(h) s(g)^{-1})
		\end{equation*}
		which is exactly the action in \cref{EquationConjActionRho}, with $\tau = s(h)$. So, $s(g) \cdot s(h) = \rho(s(g)) h$. 
		
		Now we show the ``backwards direction'', that an extension satisfying properties (1-3) gives a homomorphism $f \in \Hom_{\Gal(L/K)}(G_L,V)$.
		
		We have an isomorphism $\Gal(L/K) \simeq G \leq \GL_n(\F_q)$, and an isomorphism $\Gal(M/L) \simeq V = \F_q^n$. The group $G$ acts on $V$ via the standard action of $\GL_n(\F_q)$, and so we can form the semi-direct product $E = V \rtimes G$. The tower $M/L/K$ then satisfies the conditions of the theorem if and only if the exact sequence defining $E$ and the standard exact sequence of Galois groups are isomorphic, i.e. if the following diagram commutes: 
		\begin{equation*}
			\begin{tikzcd}
				1 & {\Gal(M/L)} & {\Gal(M/K)} & {\Gal(L/K)} & 1 \\
				1 & V & E & G & 1
				\arrow[from=1-1, to=1-2]
				\arrow[from=1-2, to=1-3]
				\arrow["\rotatebox{90}{\(\sim\)}", from=1-2, to=2-2]
				\arrow[from=1-3, to=1-4]
				\arrow["\rotatebox{90}{\(\sim\)}", from=1-3, to=2-3]
				\arrow[from=1-4, to=1-5]
				\arrow["\rotatebox{90}{\(\sim\)}", from=1-4, to=2-4]
				\arrow[from=2-1, to=2-2]
				\arrow[from=2-2, to=2-3]
				\arrow["\pi",from=2-3, to=2-4]
				\arrow[from=2-4, to=2-5]
			\end{tikzcd}
		\end{equation*}
		The multiplication in $E$ is defined by the semi-direct product rule: 
		\begin{equation}\label{EquationSemiDirectProduct}
			\pi(e) \cdot v = e v e^{-1}
		\end{equation}
		for $e \in E$ and $v \in V$. We have the quotient maps 
		\begin{align*}
			G_L & \to G_L / G_M \simeq \Gal(M/L), \\ 
			G_K & \to G_K / G_M \simeq \Gal(M/K), \\ 
			G_K & \to G_K / G_L \simeq \Gal(L/K),
		\end{align*}
		which we can add to the above to get the diagram 
		\begin{equation*}
			\begin{tikzcd}
				& {G_L} & {G_K} \\
				1 & {\Gal(M/L)} & {\Gal(M/K)} & {\Gal(L/K)} & 1 \\
				1 & V & E & G & 1
				\arrow[hook, from=1-2, to=1-3]
				\arrow[two heads, from=1-2, to=2-2]
				\arrow[two heads, from=1-3, to=2-3]
				\arrow["r",two heads, from=1-3, to=2-4]
				\arrow[from=2-1, to=2-2]
				\arrow[from=2-2, to=2-3]
				\arrow["\rotatebox{90}{\(\sim\)}", from=2-2, to=3-2]
				\arrow[from=2-3, to=2-4]
				\arrow["\rotatebox{90}{\(\sim\)}", from=2-3, to=3-3]
				\arrow[from=2-4, to=2-5]
				\arrow["\rotatebox{90}{\(\sim\)}", from=2-4, to=3-4]
				\arrow[from=3-1, to=3-2]
				\arrow[from=3-2, to=3-3]
				\arrow["\pi",from=3-3, to=3-4]
				\arrow[from=3-4, to=3-5]
			\end{tikzcd}
		\end{equation*}
		in which all subdiagrams commute. Note that our representation $\rho$ is the composition of $r$ with the isomorphism $\Gal(L/K) \to G$. Define $s \colon \Gal(L/K) \to G_K$, a section of $r$, and define $f$ as the composition of the vertical maps $G_L \to V$. This is certainly a homomorphism, so we only need to check the $\Gal(L/K)$-equivariance. That is, for $g \in \Gal(L/K)$ and $\tau \in G_L$, we must show that we have $f(g \cdot \tau) = g \cdot f(\tau)$. 
		
		First we prove that this is independent of the section $s$: choosing a different lift of $g$ amounts to the difference by an element of $\ker(r) = G_L$, so take a different lift $\sigma s(g)$ for some $\sigma \in G_L$; then
		\begin{align*}
			f(\sigma s(g) \tau s(g)^{-1} \sigma^{-1}) & = f(\sigma) + f(s(g) \tau s(g)^{-1}) + f(\sigma^{-1}) \\ 
			& = f(s(g) \tau s(g)^{-1}).
		\end{align*}
		Meanwhile, the outer action $g \cdot f(\tau)$ is via $\rho$: $g \cdot f(\tau) = \rho(s(g)) f(\tau)$. This is independent of the section $s$, since $\ker(r) = \ker(\rho) = G_L$. 
		
		To show the two actions are equal, examine $s(g) \tau s(g)^{-1}$ in $G_K$, and denote the composition of the vertical maps $G_K \to E$ by $x \mapsto \overline{x}$. Then $\overline{s(g) \tau s(g)^{-1}} = \overline{s(g)} \overline{\tau} \overline{s(g)^{-1}}$, since this is a homomorphism. Now we are in $E$, where the multiplication is as above in \cref{EquationSemiDirectProduct}, so $\overline{s(g)} \overline{\tau} \overline{s(g)^{-1}} = \pi(\overline{s(g)}) \cdot \overline{\tau}$. By the commutativity of the diagram, $\overline{\tau} = f(\tau)$ and $\pi(\overline{s(g)}) = \rho(s(g))$, and we are done. 
	\end{proof}
	
	\subsection{Translating local conditions}
	For each of the three local conditions $\rel$, $\no$ and $\unr$, we will find a corresponding property \textbf{X} of an extension $M/L$, such that $f \in L_{\p}$ if and only if $M_f/L$ has property \textbf{X} at $\p$.

	\subsubsection{The relaxed condition}
	The easiest of these is the relaxed condition, as its contribution to the Selmer group is non-existent: it imposes no condition on cocycle classes in the local cohomology group $H^1(D_{\p},V)$, and so imposes no condition on the corresponding homomorphism given by \cref{EquationCohomologyToHoms}, and so imposes no condition on the corresponding extension $M/L$, beyond those already stated in the theorem. 
	
	\subsubsection{The unramified condition}
	Just as in \cref{EquationInfiniteToFiniteDecomp,EquationInfiniteToFiniteInertia}, we can write the decomposition and inertia groups of $M/L$ as 
	\begin{equation}\label{EquationInfiniteToFiniteDecompInertiaL}
		D_{\q}(M/L) = f(D_{\q}), \spc I_{\q}(M/L) = f(I_{\q}),
	\end{equation}
	where $D_q = \Gal(\Kbar_{\mf{P}}/L_{\q})$. Now, in order to translate the unramified condition, we require a technical lemma describing the inertia subgroup of $G_K$ at a place $\q$ of $L$.
	
	\begin{proposition}
		We have $I_{\q} = I_{\p} \cap G_L$. 
	\end{proposition}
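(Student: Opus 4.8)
The plan is to turn both sides into explicit subgroups of $G_K$ and check that they coincide on the nose. First I would recall the standard realisation of decomposition groups as subgroups of $G_K$. Because the fixed place $\mf{P}$ is compatible with restriction, an automorphism of $\Kbar_{\mf{P}}$ fixing $K_{\p}$ (resp.\ $L_{\q}$) restricts to an element of $G_K$ (resp.\ $G_L$) fixing $\mf{P}$, and conversely; this identifies $D_{\p}=\Gal(\Kbar_{\mf{P}}/K_{\p})$ with $\{\sigma\in G_K : \sigma(\mf{P})=\mf{P}\}$ and $D_{\q}=\Gal(\Kbar_{\mf{P}}/L_{\q})$ with $\{\sigma\in G_L : \sigma(\mf{P})=\mf{P}\}$. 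Since $G_L\subseteq G_K$, the latter is exactly $D_{\p}\cap G_L$. As $I_{\p}\subseteq D_{\p}$, proving the proposition is then the same as proving $I_{\q}=I_{\p}\cap D_{\q}$.

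Next I would set up the residue fields. Compatibility of $\mf{P}$ gives $\mf{m}_{\mf{P}}\cap\OO_{L_{\q}}=\mf{m}_{\q}$ and $\mf{m}_{\mf{P}}\cap\OO_{K_{\p}}=\mf{m}_{\p}$, hence a natural chain $\F_{\p}\subseteq\F_{\q}\subseteq\F_{\mf{P}}$ and an injection $\Gal(\F_{\mf{P}}/\F_{\q})\hookrightarrow\Gal(\F_{\mf{P}}/\F_{\p})$. Both $I_{\p}$ and $I_{\q}$ are defined as kernels of ``reduction modulo $\mf{m}_{\mf{P}}$'' maps, and the crucial point is that these two reduction maps are compatible: for $\sigma\in D_{\q}=D_{\p}\cap G_L$, the image of $\sigma$ in $\Gal(\F_{\mf{P}}/\F_{\p})$ lies in the subgroup $\Gal(\F_{\mf{P}}/\F_{\q})$ (because $\sigma$ fixes $\OO_{L_{\q}}$, so its reduction fixes $\F_{\q}$), and there it agrees with the reduction map used to define $I_{\q}$. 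In short, $D_{\q}\to\Gal(\F_{\mf{P}}/\F_{\q})$ is obtained from $D_{\p}\to\Gal(\F_{\mf{P}}/\F_{\p})$ by restricting the source to $D_{\q}$ and composing with the injection above.

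With this in hand the conclusion is a one-line diagram chase. Since $\Gal(\F_{\mf{P}}/\F_{\q})\hookrightarrow\Gal(\F_{\mf{P}}/\F_{\p})$ is injective, an element $\sigma\in D_{\q}$ reduces to the identity in $\Gal(\F_{\mf{P}}/\F_{\q})$ if and only if it reduces to the identity in $\Gal(\F_{\mf{P}}/\F_{\p})$; hence $I_{\q}=\{\sigma\in D_{\q}:\sigma\text{ reduces to }1\}=D_{\q}\cap I_{\p}=I_{\p}\cap G_L$, using $I_{\p}\subseteq D_{\p}$ and $D_{\q}=D_{\p}\cap G_L$. The only step that is not pure formalism is the compatibility of the two reduction maps in the previous paragraph; this is precisely where the compatibility of the chosen place $\mf{P}$ with the tower $\Kbar/L/K$ recorded in the preliminaries is used, and it is the point to state carefully. (If one also wants the statement at the infinite places, where $I_{\q}=D_{\q}$ and $I_{\p}=D_{\p}$ by convention, it reduces to $D_{\q}=D_{\p}\cap G_L$, which was established in the first paragraph.)
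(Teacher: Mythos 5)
Your proof is correct and takes essentially the same approach as the paper: both arguments rest on the commutative square relating $\Gal(\Kbar_{\mf{P}}/L_{\q})\hookrightarrow\Gal(\Kbar_{\mf{P}}/K_{\p})$ to $\Gal(\F_{\mf{P}}/\F_{\q})\hookrightarrow\Gal(\F_{\mf{P}}/\F_{\p})$ via the reduction maps, with $I_{\q}$ and $I_{\p}$ as the two kernels. The only cosmetic difference is that you isolate the identity $D_{\q}=D_{\p}\cap G_L$ and invoke injectivity of the bottom map to get both inclusions in one stroke, whereas the paper proves the two inclusions separately (forward by the kernel-of-composite argument, reverse by a direct membership check).
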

	\begin{proof}
		From the definition of $I_q$ and the inclusion $\Gal(\Kbar_{\mf{P}}/L_{\q}) \hookrightarrow G_L$, it should be clear that $I_{\q} \leq G_L$. From the definitions of $I_{\p}$ and $I_{\q}$, we construct the diagram 
		\begin{equation*}
		\begin{tikzcd}
		\Gal(\Kbar_{\mathfrak{P}} / L_\q ) \arrow[r, two heads] \arrow[d, hook] & \Gal(\F_{\mf{P}}/\F_\q) \arrow[d, hook] \\
		\Gal(\Kbar_{\mathfrak{P}} / K_\p ) \arrow[r, two heads]                 & \Gal(\F_{\mf{P}}/\F_\p)                
		\end{tikzcd}
		\end{equation*}
		of groups, with $I_{\q}$ and $I_{\p}$ the kernels of the horizontal maps. The upper map and lower maps are given by $\sigma \mapsto ([x]  \mapsto [\sigma(x)])$, where $[x]$ denotes the class of $x \in \OO_{\mf{P}}$ in $\F_{\mf{P}}$. Both vertical maps are just restriction, and so the diagram commutes. By commutativity, the kernel of the upper map is in the kernel of the lower, i.e. $I_{\q} \leq I_{\p}$. Then $I_{\q} \leq I_{\p} \cap G_L$. 
		
		For the reverse inclusion, note that if $\sigma \in I_{\p}$, it is the identity in $\Gal(\F_{\mf{P}}/\F_\p)$. If it is in $G_L$ as well, it must fix $L$, and so $L_{\q}$ and $\F_{\q}$ also. So upon restriction, $\sigma$ actually gives an element of the smaller group $\Gal(\Kbar_{\mf{P}} / L_{\q})$. Therefore, $\sigma \in I_{\q}$. 
	\end{proof} 
	Now we can prove the following. 
	\begin{proposition}\label{PropositionTranslatingUnramified}
		A cocycle class $f \in H^1(G_K,V)$ satisfies the unramified local condition $H^1_{\unr}(D_{\p},V)$ if and only if the extension $M_f/L$ is unramified at all places $\q' \mid \p$ of $L$. 
	\end{proposition}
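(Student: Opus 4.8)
The plan is to push everything through the dictionary already set up in \cref{SectionCohoToCFT}. Under \eqref{EquationCohomologyToHoms} the class $f\in H^1(G_K,V)$ is a $\Gal(L/K)$-equivariant homomorphism $f\colon G_L\to V$, and by \eqref{EquationInfiniteToFiniteDecompInertiaL} the inertia group of $M_f/L$ at the fixed place $\q$ is exactly $f(I_\q)$. First I would eliminate the quantifier over all $\q'\mid\p$: the places of $L$ above $\p$ are permuted transitively by $\Gal(L/K)$, so after lifting to $G_K$ each such inertia group is a conjugate $\sigma I_\q\sigma^{-1}$ with $\sigma\in G_K$, and by the equivariance \eqref{EquationHomProperty} one has $f(\sigma I_\q\sigma^{-1})=\rho(\sigma)f(I_\q)$, which vanishes for one $\sigma$ precisely when it vanishes for all. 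Thus $M_f/L$ is unramified at every $\q'\mid\p$ if and only if $f(I_\q)=0$, and it remains to prove that $f(I_\q)=0$ is equivalent to the localisation of $f$ at $\p$ lying in $H^1_{\unr}(D_\p,V)$, that is, to the vanishing of the image of $f$ under $H^1(G_K,V)\to H^1(D_\p,V)\to H^1(I_\p,V)$.

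For the forward implication this is immediate: by the preceding proposition $I_\q=I_\p\cap G_L$, so $I_\q\triangleleft I_\p$, and since restriction is transitive the image of $f$ in $H^1(I_\q,V)=\Hom(I_\q,V)$ --- computed either through $H^1(I_\p,V)$ or through $H^1(G_L,V)$ --- is just the honest restriction $f|_{I_\q}$; hence if the localisation of $f$ dies in $H^1(I_\p,V)$ it dies in $H^1(I_\q,V)$, i.e.\ $f(I_\q)=0$. For the converse, assume $f(I_\q)=0$. Since $I_\q$ acts trivially on $V$, the inflation--restriction sequence for $I_\q\triangleleft I_\p$ reads
\[ 0\to H^1(I_\p/I_\q,V)\to H^1(I_\p,V)\to H^1(I_\q,V), \]
and $I_\p/I_\q\simeq\rho(I_\p)=I_\p(L/K)$ by \eqref{EquationInfiniteToFiniteInertia}; since the image of $f$ in $H^1(I_\q,V)$ vanishes, its image in $H^1(I_\p,V)$ is inflated from a class in $H^1(I_\p(L/K),V)$, and the whole matter comes down to showing that this class is $0$.

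This last step is the only place I expect real difficulty; everything else is bookkeeping with inflation--restriction. When $p\nmid\lvert I_\p(L/K)\rvert$ the group $H^1(I_\p(L/K),V)$ vanishes identically and we are done --- this covers, in particular, every place $\p$ at which $L/K$ is ramified with index prime to $p$, the typical situation at the places $\p\nmid p$ where the unramified condition is actually imposed. In the remaining (wildly ramified) case one has to work harder: the class in $H^1(I_\p(L/K),V)$ produced above is fixed by the procyclic quotient $D_\p/I_\p$, because it is the restriction to $I_\p$ of a class in $H^1(D_\p,V)$; combining this invariance with $\mathrm{cd}(D_\p/I_\p)=1$ (so $H^2(D_\p/I_\p,-)=0$) and with the standing vanishing of $H^1(\Gal(L/K),V)$ and $H^2(\Gal(L/K),V)$ should force it to be zero. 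I would flag this as the crux of the argument.
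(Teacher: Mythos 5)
Your argument follows essentially the same route as the paper's: translate across \eqref{EquationCohomologyToHoms}, observe that $I_{\q'}(M_f/L)=f(I_{\q'})$ by \eqref{EquationInfiniteToFiniteDecompInertiaL}, and use inflation--restriction for $I_{\q'}\triangleleft I_\p$ to pass between $H^1(I_\p,V)$ and $\Hom(I_{\q'},V)$. Your preliminary reduction to a single $\q$ via $\rho$-equivariance is a small stylistic variation --- the paper instead runs the diagram separately for each $\q'$ --- but both arrive at the same commutative square.

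Where you diverge from the paper's presentation is in refusing to take for granted that the restriction $H^1(I_\p,V)\to\Hom_{I_\p^{\q'}(L/K)}(I_{\q'},V)$ is injective: you isolate, correctly, that the kernel is exactly $H^1(I_\p(L/K),V)$ (since $I_{\q'}$ acts trivially on $V$), so the converse implication hinges on this group vanishing. The paper simply labels the bottom arrow in the diagram ``$\sim$'' and says ``the converse follows immediately,'' which amounts to a silent extension of the standing hypothesis $H^1(\Gal(L/K),V)=H^2(\Gal(L/K),V)=0$ to the subgroups $I_\p(L/K)$. Your handling of the tame case ($p\nmid\lvert I_\p(L/K)\rvert$) is correct and covers the generic situation; the procyclic-quotient argument you sketch for the wild case is not worked out and I do not think it closes cleanly as stated --- invariance under $D_\p/I_\p$ and cohomological dimension one of that quotient do not obviously force a class in $H^1(I_\p(L/K),V)$ to vanish --- but you have located precisely the point that the paper also passes over without proof, and your instinct to flag it as the crux is sound. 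In short: same method, and your caution is warranted; what you leave unresolved is an unstated assumption in the paper rather than a gap peculiar to your write-up.
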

	\begin{proof}
		Let $f \in \Hom_{\Gal(L/K)}(G_L,V)$ satisfy the local condition $H^1_{\unr}(D_{\p},V)$. By this we mean that the cocycle class $g \in H^1(G_K,V)$ that maps to $f$ under the isomorphism in \cref{EquationCohomologyToHoms} satisfies $H^1_{\unr}(D_{\p},V)$, i.e. the restriction of $g$ to $I_{\p}$ is trivial. Applying inflation-restriction with $G = I_{\p}$, $N = I_{\q'}$ and $M=V$, we obtain the commutative diagram of spaces 
		\begin{equation*}
			\begin{tikzcd}
				{H^1(G_K,V)} \arrow[d, "\mathrm{res}"] \arrow[r, "\sim"] & {\Hom_{\Gal(L/K)}(G_L,V)} \arrow[d, "\mathrm{res}"] \\
				{H^1(I_\p,V)} \arrow[r,"\sim"]                   & {\Hom_{I_\p^{\q'}(L/K)}(I_{\q'},V)}         
			\end{tikzcd}
		\end{equation*}
		So $f$ satisfies the unramified local condition if and only if it is trivial in the space $\Hom_{I_\p^{\q'}(L/K)}(I_{\q'},V)$, i.e. if $I_{\q'} \leq \ker(f)$. Then $M/L$ being unramified at $\q'$ follows immediately from \cref{EquationInfiniteToFiniteDecompInertiaL}:
		\begin{equation*}
			I_{\q'}(M/L) = f(I_{\q'}) = 0.
		\end{equation*}
		In fact, the converse follows immediately from this as well. 
	\end{proof}
		
	\subsubsection{The nearly-ordinary condition}
	Finally, to translate the nearly-ordinary condition, consider a line $\ell \leq V$ fixed by $D_{\p}(L/K)$. When one is actually computing this fixed line, it is necessary to choose a particular (finite) decomposition group in $\Gal(L/K)$, which is done by fixing a place $\q \mid \p$. A different choice of decomposition group will give a different fixed line. Call these two choices $D_1$ and $D_2$, fixing $\ell_1$ and $\ell_2$ respectively. Then $D_1 = g^{-1} D_2 g$ for some $g \in \Gal(L/K)$. One immediately sees that $\ell_1 = g(\ell_2)$. 
	
	We will see that this allows us to pick a finite decomposition group at $\p$ in $\Gal(L/K)$ and work entirely with respect to that choice, which will not affect the rank given by the computation of $\Sel_{\no}(\rho)$. 
	
	\begin{proposition}
		A cocycle class $f \in H^1(G_K,V)$ satisfies the local condition $H^1_{\no}(D_\p,V)$ if and only if $I_{\q}(M/L) \leq \ell$, where $\q \mid \p$ defines the line $\ell$. 
	\end{proposition}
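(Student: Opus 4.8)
The plan is to follow the template of \cref{PropositionTranslatingUnramified}, but with the coefficient module $V$ replaced by the quotient $V/\ell$ once we pass to the inertia group. Throughout, fix the place $\q \mid \p$ of $L$ singled out in the discussion preceding the statement, together with the finite decomposition group $D_\p^\q(L/K)$ it determines and the line $\ell \leq V$ fixed by that group; by \cref{EquationInfiniteToFiniteDecomp} this is the same as saying $\rho(D_\p)$ fixes $\ell$, so $\ell$ is stable under $D_\p$ and a fortiori under $I_\p$, and $V/\ell$ is a genuine $D_\p$-module (and $I_\p$-module).

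First I would unwind the definition. Writing $g \in H^1(G_K,V)$ for the class matched with $f$ under \cref{EquationCohomologyToHoms}, the condition $f \in H^1_{\no}(D_\p,V)$ says exactly that the image of $\res_{I_\p}(g) \in H^1(I_\p,V)$ under the map $\pi_* \colon H^1(I_\p,V) \to H^1(I_\p,V/\ell)$ induced by the projection $\pi \colon V \to V/\ell$ vanishes. Next I would stack two commutative squares. The top one is the square already produced in the proof of \cref{PropositionTranslatingUnramified} (taking $\q' = \q$), which identifies $\res_{I_\p}(g)$ with $f|_{I_\q} \in \Hom_{I_\p^\q(L/K)}(I_\q,V)$; below it I append the square coming from $\pi_*$. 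For its bottom row I apply inflation--restriction (\cref{EquationInflationRestriction}) with $G = I_\p$, $N = I_\q$ and $M = V/\ell$: here $I_\q = I_\p \cap G_L$ (shown above), which is normal in $I_\p$ because $G_L \triangleleft G_K$, and $I_\q \leq G_L = \ker\rho$ acts trivially on $V$, hence on $V/\ell$, so $H^1(I_\q,V/\ell) = \Hom(I_\q,V/\ell)$ and, under the same vanishing hypothesis used elsewhere in the excerpt (now $H^1(I_\p^\q(L/K),V/\ell) = 0$), the restriction map $H^1(I_\p,V/\ell) \hookrightarrow \Hom_{I_\p^\q(L/K)}(I_\q,V/\ell)$ is injective. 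Tracing $g$ down the left column and then across, or across then down, shows $\pi_*\res_{I_\p}(g)$ corresponds to the homomorphism $\pi \circ f|_{I_\q} \colon I_\q \to V/\ell$.

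Then the equivalence drops out. Since restriction from $H^1(I_\p,V/\ell)$ to $H^1(I_\q,V/\ell) = \Hom(I_\q,V/\ell)$ is injective and carries $\pi_*\res_{I_\p}(g)$ to $\pi\circ f|_{I_\q}$, we get $f \in H^1_{\no}(D_\p,V)$ iff $\pi \circ f|_{I_\q}$ is the zero homomorphism, iff $f(I_\q) \leq \ker\pi = \ell$. Finally, by \cref{EquationInfiniteToFiniteDecompInertiaL} we have $I_\q(M/L) = f(I_\q)$, so this is precisely the condition $I_\q(M/L) \leq \ell$ (viewing both sides inside $\Gal(M/L) \simeq V$), which is the claim; as in the unramified case, both directions are obtained simultaneously.

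I expect the main point requiring care, rather than genuine difficulty, to be the bookkeeping tying the line $\ell$ to the place $\q$: $\ell$ exists only once $D_\p^\q(L/K)$ has been fixed, so the same $\q$ must be used both in forming $V/\ell$ (so that the acting group genuinely preserves $\ell$) and in the identity $I_\q(M/L) = f(I_\q)$; a different choice would replace $(\q,\ell)$ by a conjugate $(\q', g\ell)$ and the statement would be correspondingly conjugated. The only other technical wrinkle is the vanishing $H^1(I_\p^\q(L/K),V/\ell)=0$ needed for the injectivity above; this is the analogue of the vanishing already assumed in the excerpt, but must be justified slightly differently since $V/\ell$ need not be irreducible, and in any case it is not needed for the implication $f \in H^1_{\no}(D_\p,V) \Rightarrow f(I_\q)\leq\ell$.
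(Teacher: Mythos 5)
Your proof is correct and follows essentially the same route as the paper: both augment the commutative square from \cref{PropositionTranslatingUnramified} with an extra row obtained by passing from $V$ to the quotient $V/\ell$, then chase the class $f$ through the diagram and invoke $I_\q(M/L) = f(I_\q)$ to translate the vanishing of the image in $\Hom_{I_\p(L/K)}(I_\q,V/\ell)$ into the containment $f(I_\q)\leq\ell$. The one place you go beyond what the paper writes is in isolating the injectivity of the restriction map $H^1(I_\p,V/\ell)\hookrightarrow\Hom_{I_\p^\q(L/K)}(I_\q,V/\ell)$ — needed for the converse direction — and noting that it relies on an extra vanishing $H^1(I_\p^\q(L/K),V/\ell)=0$ not literally covered by the paper's blanket assumption on $H^i(\Gal(L/K),V)$; the paper simply labels that arrow an isomorphism and declares the converse ``immediate,'' so your remark is a genuine (if small) tightening of the bookkeeping rather than a deviation in method.
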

	\begin{remark}
		Here, \textit{a priori}, $I_{\q}(M/L)$ is only defined in $\Gal(M/L)$ up to conjugacy. But since $\Gal(M/L)$ is abelian, there is no ambiguity. 
	\end{remark}
	\begin{proof}
		We can add another row to the commutative diagram in the proof of \cref{PropositionTranslatingUnramified} by taking the quotient of $V$ by the fixed line $\ell$: 
		\begin{equation*}
			\begin{tikzcd}
				{H^1(G_K,V)} \arrow[r, "\sim"] \arrow[d]  & {\Hom_{\Gal(L/K)}(G_L,V)} \arrow[d]          \\
				{H^1(I_\p,V)} \arrow[r, "\sim"] \arrow[d] & {\Hom_{I_\p(L/K)}(I_\q,V)} \arrow[d] \\
				{H^1(I_\p,V/\ell)} \arrow[r, "\sim"]       & {\Hom_{I_\p(L/K)}(I_\q,V/\ell)}      
			\end{tikzcd}
		\end{equation*}
		Assume $f \in \Hom_{\Gal(L/K)}(G_L,V)$ satisfies the nearly-ordinary condition, i.e. its image in $\Hom_{I_\p(L/K)}(I_\q,V/\ell)$ is trivial. Then $f(I_{\q}) \leq \ell$. Again we use that $f(I_{\q}) = I_{\q}(M/L)$ to see that, therefore, $I_{\q}(M/L) \leq \ell$. And, as in the proof of \cref{PropositionTranslatingUnramified}, this also proves the converse, that $M/L$ with $I_{\q}(M/L) \leq \ell$ satisfies the nearly-ordinary condition. 
	\end{proof}

	\subsection{Class field theory}
	By assumption, $V \simeq \F_p \oplus \F_p$ as an additive group, so an extension $M/L$ with $\Gal(M/L)\simeq V$ will be a subextension of the maximal abelian extension of $L$. Further, as we take the unramified condition for all primes of $K$ not over $p$ (which translates to the extension $M/L$ being unramified), $M/L$ in fact lies in the maximal abelian extension of $L$ unramified outside of $x \mid p$. As an immediate consequence we have the following: 
	\begin{proposition}
		Selmer groups of mod $p$ Galois representations are finite. 
	\end{proposition}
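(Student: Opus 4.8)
The plan is to exhibit a single finite extension $N/L$ that contains every field $M_f$ arising from a Selmer class, and then bound $\Sel_{\L}(\rho)$ by the finite group $\Hom(\Gal(N/L),V)$. Take $S$ to be the finite set of places of $L$ consisting of all places above $p$ together with all archimedean places, and let $N$ be the maximal abelian extension of $L$ of exponent dividing $p$ that is unramified outside $S$. The only step with real content is to note that $N/L$ is \emph{finite}: this is a standard consequence of global class field theory, since by Artin reciprocity $\Gal(N/L)$ is an elementary abelian $p$-group controlled by the finite $S$-ideal class group of $L$ and by the groups $L_v^{\times}/(L_v^{\times})^p$ for $v\in S$ -- each of which is finite because $L_v^{\times}$ is topologically finitely generated -- the finiteness ultimately resting on the finiteness of the class group of $L$ and Dirichlet's $S$-unit theorem. (One could equally invoke the general finiteness theorem for the cohomology of the Galois group of the maximal extension of $L$ unramified outside a finite set, but this keeps us inside the class field theory picture built up in this section.)

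Granting this, let $f\in\Sel_{\L}(\rho)\subseteq H^1(G_K,V)$ for $\L\in\{\L_{\rel},\L_{\no},\L_{\unr}\}$; via \cref{EquationCohomologyToHoms} regard $f$ as a homomorphism $G_L\to V$, with associated extension $M_f/L$ from \cref{TheoremLinesToFields}. Part (1) of that theorem gives $\Gal(M_f/L)\simeq V$, which has exponent dividing $p$, and \cref{PropositionTranslatingUnramified} -- together with the fact that all three Selmer systems impose the unramified condition at every place of $K$ not above $p$ -- shows $M_f/L$ is unramified at every place of $L$ not above $p$, hence unramified outside $S$. Therefore $M_f\subseteq N$, so $f$ factors through the finite quotient $\Gal(N/L)$ of $G_L$. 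This yields an injection $\Sel_{\L}(\rho)\hookrightarrow\Hom(\Gal(N/L),V)$, whose target is finite; hence $\Sel_{\L}(\rho)$ is finite.

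I do not expect a genuine obstacle: once the finiteness of $N/L$ is in hand, the rest is bookkeeping with correspondences already established. The things to be careful about are that the \emph{relaxed} condition imposes nothing at places above $p$ and that no condition is imposed at archimedean places -- both harmless, as those places lie in $S$ -- and that, if one does not wish to assume $H^1(\Gal(L/K),V)=H^2(\Gal(L/K),V)=0$, the argument should be run through the inflation--restriction sequence rather than the isomorphism \cref{EquationCohomologyToHoms}: the image of $\Sel_{\L}(\rho)$ in $\Hom_{\Gal(L/K)}(G_L,V)$ still lands in the finite group $\Hom(\Gal(N/L),V)$, and the kernel of restriction is $H^1(\Gal(L/K),V)$, finite as the cohomology of a finite group with finite coefficients.
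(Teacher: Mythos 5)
Your proof is correct but rests on a different finiteness input than the paper's. The paper, after observing (as you do) that every $M_f$ arising from a Selmer class is an elementary abelian $p$-extension of $L$ unramified outside the primes above $p$, simply invokes the Hermite--Minkowski theorem: there are finitely many extensions of $L$ of bounded degree with a prescribed finite set of ramifying primes, hence finitely many candidate fields $M_f$ and so finitely many lines in the Selmer group. You instead stay inside class field theory: you bound everything by a single finite extension $N/L$ (the maximal abelian exponent-$p$ extension unramified outside $S = \{\p \mid p\infty\}$), establish $\Gal(N/L)$ finite via Artin reciprocity together with the finiteness of the $S$-class group and Dirichlet's $S$-unit theorem, and conclude by injecting the Selmer group into $\operatorname{Hom}(\Gal(N/L),V)$. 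Both are standard; yours is arguably more in the spirit of the computational CFT framework the paper sets up (indeed, it is close to the argument implicitly underlying the modulus bound of \cref{EquationMaxExp} and the finiteness of the resulting ray class field), while the paper's is shorter and avoids unpacking any CFT. A nice additional touch in your write-up, which the paper does not address, is the remark that if one declines to assume $H^1(\Gal(L/K),V) = H^2(\Gal(L/K),V) = 0$, the argument still works through inflation--restriction since the inflation kernel is finite group cohomology with finite coefficients.
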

	This follows from the classical result that there are only finitely many extensions of a number field $L$ of a fixed degree and finite set of ramifying primes. 
	
	This maximal extension can be computed using class field theory. There is some modulus $\mf{m}$ such that the ray class field $L(\mf{m})$ is the maximal abelian extension of $L$, unramified outside $\mf{m}$. Subfields corresponding to elements in the relaxed Selmer group can then be found using the Galois theory of $L(\mf{m})/L$, and further refined to the nearly-ordinary and unramified Selmer groups by studying their ramification properties.

	\section{Technical considerations}\label{SectionTechnical}
	In this section, we note some important technical details used during implementation of the algorithm. 
	
	\subsection{Choosing the modulus}
	To find the maximal abelian extension of $L$ unramified away from $p$, we need to include all primes $\p \mid p$ in our modulus. Let $M/L$ be an extension corresponding to a line in $\Sel_{\rel}(\rho)$, realised in class field theory by the conductor $\mf{f}$. Exercise 6 of Appendix A of Cohen \cite{CohenAdvancedTopics} tells us that 
	\begin{equation}\label{EquationMaxExp}
		v_\p(\mf{f}) \leq \left\lfloor \frac{2pe}{p-1} \right\rfloor + 1
	\end{equation}
	where $e = e(\p/p)$ is the ramification index of a prime $\p \mid p$ in $L$. Then $M/L$ is contained in the field ray class field of the modulus 
	\begin{equation*}
		\mf{m} = \prod_{ \p \mid p } \p^{\left\lfloor \frac{2pe}{p-1} \right\rfloor + 1}.
	\end{equation*}
	We allow any of the infinite places of $L$ to ramify.
	
	In practice, the modulus $\mf{m}$ is often larger than we require. Once we have cut out the maximal $p$-extension $A/L$ inside the ray class field of $\mf{m}$, we can replace it with the ray class field defined by the modulus equal to the conductor of $A$. In practice, this eliminates unnecessary information in the ray class field, and speeds up the computation. 
	
	\subsection{Compatibility of the choice of fixed line}\label{SectionCompatibleLineChoice}
	When performing actual calculations with nearly-ordinary Selmer groups, it is necessary to make a choice of decomposition group in $\Gal(L/K)$, which arises from a choice of prime $\q \mid \p$ in $L$. This choice $\q$ fixes the decomposition group $D_{\p}^{\q}(L/K)$, and, further, the line $\ell$ that $\rho$ fixes when restricted to this subgroup. We need to understand how the result of the computation of $\dim \Sel_{\no}(\rho)$ depends on this choice. 
	
	Let $\q' \mid \p$ in $L$. Since $\Gal(L/K)$ acts transitively on the primes of $L$ over $\p$, there is some $g \in \Gal(L/K)$ such that $\q' = g(\q)$. Further, note that 
	\begin{equation*}
		g D_{\p}^{\q}(L/K) g^{-1} = D_{\p}^{\q'}(L/K)
	\end{equation*}
	and that $D_{\p}^{\q'}(L/K)$ fixes the line $g(\ell) \leq V$. 
	\begin{proposition}
		With the above setup, we have 
		\begin{equation*}
			I_{\q}(M/L) \leq \ell \iff I_{\q'}(M/L) \leq g(\ell).
		\end{equation*}
	\end{proposition}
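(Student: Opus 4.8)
The plan is to reduce the equivalence to a single clean identity in $\Gal(M/L)$: namely, that
\begin{equation*}
	I_{\q'}(M/L) = \rho(g)\bigl(I_{\q}(M/L)\bigr),
\end{equation*}
where $\rho(g) \in \GL(V)$ acts on $\Gal(M/L) \simeq V$ through the identification of \cref{TheoremLinesToFields}(3). Granting this, the proposition is immediate: $\rho(g)$ is a linear automorphism of $V$, hence preserves inclusions of subsets, and by definition $g(\ell) = \rho(g)(\ell)$; therefore
\begin{equation*}
	I_{\q}(M/L) \subseteq \ell \iff \rho(g)\bigl(I_{\q}(M/L)\bigr) \subseteq \rho(g)(\ell) \iff I_{\q'}(M/L) \subseteq g(\ell).
\end{equation*}

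To establish the identity I would work inside $\Gal(M/K)$, which fits into the exact sequence $1 \to \Gal(M/L) \to \Gal(M/K) \to \Gal(L/K) \to 1$ of \cref{TheoremLinesToFields}. Fix a place $\mathfrak{Q}$ of $M$ lying over $\q$ and a lift $\widetilde{g} \in \Gal(M/K)$ of $g$. Since $\widetilde{g}$ restricts to $g$ on $L$, the place $\widetilde{g}(\mathfrak{Q})$ lies over $g(\q) = \q'$, and the standard transformation law for inertia groups under the Galois action gives
\begin{equation*}
	I_{\widetilde{g}(\mathfrak{Q})}(M/L) = \widetilde{g}\, I_{\mathfrak{Q}}(M/L)\, \widetilde{g}^{\,-1}.
\end{equation*}
Because $\Gal(M/L)$ is abelian (\cref{TheoremLinesToFields}(1)), the conjugation action of $\Gal(M/K)$ on it is independent of the choice of lift and so descends to an action of $\Gal(L/K)$; by \cref{TheoremLinesToFields}(3) this descended action is exactly the one via $\rho$. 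Thus the right-hand side above is $\rho(g)\bigl(I_{\q}(M/L)\bigr)$, which is the desired identity. Alternatively, one can run the same computation through the homomorphism $f$ corresponding to the line $\langle f\rangle$, using $I_{\q'} = s(g)\, I_{\q}\, s(g)^{-1}$ in $G_K$ for a lift $s(g)$ of $g$, together with the equivariance of $f$ from \cref{EquationConjActionRho} and the equalities $I_{\q}(M/L) = f(I_{\q})$, $I_{\q'}(M/L) = f(I_{\q'})$ of \cref{EquationInfiniteToFiniteDecompInertiaL}.

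The only genuine subtlety---rather than an obstacle---is that both sides of the claimed equivalence involve objects defined only up to auxiliary choices, so one should check the statement is well posed before proving it. The inertia group $I_{\q'}(M/L)$ depends a priori on the choice of place of $M$ above $\q'$ (equivalently on the lift $\widetilde{g}$), but different choices change it by conjugation in $\Gal(M/L)$, which is trivial since that group is abelian; this is the content of the remark preceding the proposition. Likewise $g(\ell)$ is well defined as the line fixed by $g D_{\p}^{\q}(L/K) g^{-1} = D_{\p}^{\q'}(L/K)$, as recorded in the discussion in \cref{SectionCompatibleLineChoice}. Once these points are noted, the remaining argument is pure bookkeeping, entirely parallel to the earlier translation propositions.
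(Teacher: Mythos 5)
Your proof is correct and follows essentially the same route as the paper: both reduce the statement to the single identity $I_{\q'}(M/L) = g \cdot I_{\q}(M/L) = \rho(g)\bigl(I_{\q}(M/L)\bigr)$ and conclude by applying the bijection $\rho(g)$, with well-definedness handled via the abelianness of $\Gal(M/L)$. The only difference is that you invoke the standard transformation law $\widetilde{g}\,I_{\mathfrak{Q}}(M/L)\,\widetilde{g}^{-1} = I_{\widetilde{g}(\mathfrak{Q})}(M/L)$ by name, whereas the paper re-derives it through a short element-wise calculation in $\OO_M$.
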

	\begin{proof}
		This follows immediately from the equality of inertia groups $g \cdot I_{\q}(M/L) = I_{\q'}(M/L)$, which we now show. Take $h \in I_{\q}(M/L)$ and $x \in \mathcal{O}_{M/L}$. Recall that $g$ acts on $\Gal(M/L)$ (and so the subgroup $I_{\q}(M/L)$) by conjugation by an extension $\tilde{g}$ of $g$ to $\Gal(M/K)$. Writing $y=\tilde{g}^{-1}(x)$, we have 
		\begin{align*}
			(g \cdot h)(x) - x & = (\tilde{g} h \tilde{g}^{-1})(x) - x  \\ & = (\tilde{g}h)(y) - \tilde{g}(y) \\ 
			& = \tilde{g}(y+q) - \tilde{g}(y) \\ 
			& = \tilde{g}(q),
		\end{align*}
		where $q \in \tilde{\q}$, for $\tilde{\q}$ a choice of place dividing $\q$ in $M$. \textit{A priori} the inertia group $I_{\q}(M/L)$ depends on this choice, and is only defined up to conjugacy in $M/L$. But $\Gal(M/L)$ is abelian, so the inertia subgroup is independent of $\tilde{\q}$. Taking some place $\tilde{\q'}$ in $M/L$ over $\q'$, we note that $\tilde{\sigma}(\tilde{\q}) = \tilde{\q'}$, and so $(g \cdot h)(x) -x \in \tilde{\q'}$, i.e. $g \cdot h \in I_{\q'}(M/L)$. 
	\end{proof}
	From this, we know the rank is independent of the choice of $\q$, since a different choice $\q'$ yields a different fixed line, and either both inertia groups are contained in their respective fixed lines, or neither is. 
	
	\subsection{\textsf{Magma}'s \texttt{FldNum} versus \texttt{FldAb}}
	In \cref{SectionExamples}, we describe the extensions $M/L$ and their ramification properties explicitly. In the actual algorithm, we make heavy use of \textsf{Magma}'s \texttt{FldAb} type, where computations of these kinds are much faster, as they use Fieker's algorithms with the Artin map \cite{FiekerCFT}. For $\rho$ a representation with image $\GL_2(\F_2)$, the extensions $M/K$ will be of degree $\abs{\F_2^2} \cdot \abs{\GL_2(\F_2)} = 24$, which is in the range of manageable computations for \texttt{FldNum} types. But for the $\SD_{16} \leq \GL_2(\F_3)$ examples we consider, the degree of $M/K$ becomes $144$. With the current algorithm, these examples cannot be computed with as \texttt{FldNum}s in any reasonable amount of time. 
	
	The main improvement offered by the \texttt{FldAb} type is fast computation of inertia groups. Suppose we have $M/L$ and want to compute the inertia group at a prime $\p$ of $L$. The field $M$ is given as a subfield of $L(\mf{m})$ for an appropriately-chosen modulus $\mf{m}$, and the modulus $\mf{m}' = \mf{m}/\p^{v_\p(\mf{m})}$ defines the maximal abelian extension of $L$ unramified outside of $p$ \textit{and} the ideal $\p$. Then 
	\begin{equation*}
		M^{I_{\p}(M/L)} = M \cap L(\mf{m}').
	\end{equation*}
	This is a computation \textsf{Magma} performs quickly, and allows us to quickly infer the size of $I_{\p}(M/L)$. \textsf{Magma} also allows for the computation of $M^H$ for any $H \leq \Gal(M/L)$ in essentially the same way. Then, for a fixed line $\ell$ defining the nearly-ordinary Selmer group, we have $I_{\p}(M/L) \leq \ell$ if and only if $M^{\ell} \subset M^{I_{\p}(M/L)}$. 
	
	\subsection{Class group bounds}
	All of our computations use the class group bounds afforded by assuming the Generalised Riemann Hypothesis, which makes many examples tractable.

	\section{Examples}\label{SectionExamples}
	We illustrate the method with some examples. Some details, such as exact identifications of field automorphisms and elements, might seem extraneous, but we provide them in the interest of demonstrating that the method is completely explicit. 
	
	\begin{remark}
		During the rewriting of the code for this paper, we found some subtle bugs that affected some computations over imaginary quadratic fields. These bugs have been fixed, and the updated data is disseminated alongside the code at the above Github repository. 
	\end{remark}
	
	\subsection{Nearly-ordinary $\boldsymbol{\GL_2(\F_2)}$}
	Let 
	\begin{equation*}
		E \colon y^2 = x^3 - x^2 - x + 2
	\end{equation*}
	be an elliptic curve; it has conductor $236$, and LMFDB label \LMRefDB{https://www.lmfdb.org/EllipticCurve/Q/236/a/1}{236.a1}. Its $2$-torsion field is given by $L = \Q(\alpha)$, with $\alpha$ a root of  
	\begin{equation*}
		x^6 - 3x^5 + 10x^4 - 15x^3 + 21x^2 - 14x + 4=0.
	\end{equation*}
	The automorphism group of $L/\Q$ is $S_3 \simeq \GL_2(\F_2)$; it is generated by 
	\begin{align*}
		\sigma \colon \alpha & \mapsto \frac{1}{2}(\alpha^5 - 3\alpha^4 + 10\alpha^3 - 13\alpha^2 + 19\alpha - 6), \\ 
		\tau \colon \alpha & \mapsto \frac{1}{2}(-\alpha^5 + 3\alpha^4 - 10\alpha^3 + 13\alpha^2 - 19\alpha + 8),
	\end{align*}
	with $\sigma^3 = \tau^2 = 1$, $\tau \sigma \tau = \sigma^2$. We pick the mod $2$ representation $\rho \colon \Gal(L/\Q) \to \GL_2(\F_2)$ given by 
	\begin{equation*}
		\sigma \mapsto \begin{pmatrix}
			0 & 1 \\ 1 & 1
		\end{pmatrix}, \spc \tau \mapsto \begin{pmatrix}
			1 & 1 \\ 0 & 1
		\end{pmatrix}.
	\end{equation*}	
	We have $2\Z_L = \p_1 \p_2 \p_3$, where 
	\begin{align*}
		\p_1 & = (2, 2+\alpha^3), \\
		\p_2 & = (2, 1 + \alpha + \alpha^2), \\
		\p_3 & = (2, 3+\alpha+\alpha^2+\alpha^3).
	\end{align*}
	For each $\p_i$, the maximal exponent from \cref{EquationMaxExp} is $5$, so we take the modulus $\mf{m}  = (2\Z_L)^5$. The associated abelian extension $L(\mf{m})$ of $L$ has Galois group 
	\begin{equation*}
		\Gal(L(\mf{m})/L) \simeq (\Z/2)^2 \oplus (\Z/4)^3 \oplus \Z/24,
	\end{equation*}
	and so the maximal abelian $2$-extension $A$, unramified outside of $2$, has $\Gal(A/L) \simeq (\Z/2)^6$. The action of $\Gal(L/\Q)$ on $\Gal(A/L)$ is given by 
	\begin{equation*}
		\sigma \mapsto \begin{pmatrix}
			1 & 1 & 0 & 0 & 1 & 0 \\ 
			1 & 0 & 0 & 0 & 0 & 1 \\ 
			1 & 0 & 1 & 0 & 1 & 1 \\
			1 & 0 & 0 & 1 & 1 & 1 \\
			0 & 0 & 0 & 0 & 0 & 1 \\
			0 & 0 & 0 & 0 & 1 & 1 
		\end{pmatrix},\spc \tau \mapsto \begin{pmatrix}
			0 & 1 & 0 & 0 & 1 & 0 \\ 
			1 & 0 & 0 & 0 & 1 & 0 \\ 
			0 & 0 & 1 & 0 & 0 & 0 \\ 
			0 & 0 & 0 & 1 & 0 & 0 \\ 
			0 & 0 & 0 & 0 & 1 & 0 \\ 
			0 & 0 & 0 & 0 & 1 & 1
		\end{pmatrix}.
	\end{equation*}
	Extensions $M/L$ satisfying the conditions of \cref{TheoremLinesToFields} are in bijection with $\Gal(L/K)$-submodules\footnote{Submodules rather than simply subgroups, as $\Gal(L/K)$ acts on $\Gal(M/L)$.} of $\Gal(A/L)$ that are isomorphic to $(\Z/2)^2$, of which there are $4$. Thus, we find $4$ normal subfields $M/L$ with $\Gal(M/L) \simeq V$; these are given by 
	\begin{align*}
		M_1 & = L(\sqrt{\beta_1},\sqrt{\beta_2}), \\
		M_2 & = L(\sqrt{\beta_3},\sqrt{\beta_4}), \\ 
		M_3 & = L(\sqrt{\beta_1 \beta_3},\sqrt{\beta_2 \beta_4}), \\ 
		M_4 & = L(\sqrt{2},\sqrt{-1}),
	\end{align*}
	where 
	\begin{align*}
		\beta_1 & = -\alpha^3 + \alpha^2 - 3\alpha, \\ 
		\beta_2 & = \alpha^2 - \alpha + 2, \\ 
		\beta_3 & = \frac{1}{2}(-\alpha^4 + 2\alpha^3 - 4\alpha^2 + \alpha),  \\ 
		\beta_4 & = \frac{1}{2}(-\alpha^4 + 2\alpha^3 - 4\alpha^2 + 3\alpha),\\ 
	\end{align*}
	are elements of $\Z_L$ with norms $16$, $16$, $1$ and $1$ respectively. 
	
	First, to compute the relaxed Selmer group, we find the subset of the $M_i$ acted upon by $\Gal(L/\Q)$ via $\rho$; this turns out to be $\{M_1,M_2,M_3\}$ (the action on $M_4$ is trivial), giving three lines in $\Sel_{\rel}(\rho)$, i.e. 
	\begin{equation*}
		\rank(\Sel_{\rel}(\rho)) = 2.
	\end{equation*}
	
	For the nearly-ordinary group, we choose the prime $\p_1$ over $2$ to compute the decomposition group 
	\begin{equation*}
		D_2^{\p_1}(L/K) = \langle \sigma \tau \rangle = \langle \begin{psmallmatrix}
			0 & 1 \\ 1 & 0
		\end{psmallmatrix} \rangle,
	\end{equation*}
	which fixes the line $\ell = \{ 0, \begin{psmallmatrix}
		1 \\ 1
	\end{psmallmatrix} \}$. In $M_2$ and $M_3$, the prime $\p_1$ factors as $\q_1^4$, so we immediately conclude that $\abs{I_{\p_1}(M_i/L)} = 4$ for $i=2,3$, and so we cannot have $I_{\p_1}\leq \ell$ for these fields. For $M=M_1$, we find that the inertia group of $\p_1$ is generated by the map 
	\begin{equation*}
		\mu \colon \begin{cases}
			\beta_1 & \mapsto -\beta_1 \\ 
			\beta_2 & \mapsto -\beta_2,
		\end{cases}
	\end{equation*}
	This subgroup is exactly the line $\ell$, so we have $I_{\p_1}(M/L)\leq \ell$, and so 
	\begin{equation*}
		\rank(\Sel_{\no}(\rho)) = 1.
	\end{equation*}
	
	Finally, since none of the $M_i$ for $i=1,2,3$ have trivial inertia group for $\p_1$ (equivalently, any of the primes over $2$ in $L$), we have that 
	\begin{equation*}
		\rank(\Sel_{\unr}(\rho)) = 0.
	\end{equation*}
	
	In \cref{SectionStatsGL22}, we note that all the nearly-ordinary mod $2$ Selmer groups we compute have positive rank. In this instance, the extension realising this rank is given by $M_1$, generated by the square roots of two elements of norm $16$. 
	
	\subsection{Multiple fixed lines from $\boldsymbol{\SD_{16} \leq \GL_2(\F_3)}$}\label{SectionSD16}
	The group 
	\begin{equation*}
		\SD_{16} = \langle a,b \mid a^2 = b^8 = 1, a^{-1}ba = b^3 \rangle
	\end{equation*}
	is a non-abelian group of order $16$, lying inside $\GL_2(\F_3)$. Let 
	\begin{align*}
		E \colon y^2=x^3+x^2-9x+55
	\end{align*}
	be the elliptic curve with LMFDB label \LMRefDB{https://www.lmfdb.org/EllipticCurve/Q/3136/d/2}{3136.d2}. Its mod $3$ representation is cut out by a field $L$ with $\Gal(L/\Q) \simeq \SD_{16}$. We choose the representation such that 
	\begin{equation*}
		\rho(a) = \begin{pmatrix}
			1 & 0 \\ 2 & 2
		\end{pmatrix}, \spc \rho(b) = \begin{pmatrix}
			0 & 2 \\ 2 & 1
		\end{pmatrix}.
	\end{equation*}
	We compute that $D_3(L/K) = \langle \begin{psmallmatrix}
		2 & 0 \\ 1 & 1
	\end{psmallmatrix} \rangle \simeq C_2$, which fixes lines $\ell_1 = \{0, \begin{psmallmatrix}
		0 \\ 1
	\end{psmallmatrix}\}$, $\ell_2 = \{ 0, \begin{psmallmatrix}
		1 \\ 1
	\end{psmallmatrix} \} $ in $\F_3^2$. The action of $\rho$ on $V/\ell_i$ is, respectively, ramified for $i=1$ and unramified for $i=2$. Both lines give rise to their own local condition at $3$, and their own Selmer groups. We compute the ranks 
	\begin{equation*}
		\rank(\Sel_{\rel}(\rho)) = 2,
	\end{equation*}
	\begin{equation*}
		\rank(\Sel_{\no,\ell_1}(\rho)) =1, \spc \rank(\Sel_{\no,\ell_2}(\rho)) =2,
	\end{equation*}
	and $\rank(\Sel_{\unr}(\rho)) = 1$. This example shows that the Selmer groups arising from different fixed lines in the same representation need not have the same dimension. This discrepancy in ranks is investigated further in \cref{SectionSD16Stats}. We will also use the nearly-ordinary Selmer group with unramified quotient in \cref{SectionBKTest}.

	\subsection{Statistics}
	In this subsection we give some statistics for these Selmer groups for various groups and fields. 
	
	\subsubsection{Image $\GL_2(\F_2)$}\label{SectionStatsGL22}
	We compute the relaxed, nearly-ordinary and unramified ranks for Selmer groups associated to representations with image $\GL_2(\F_2)$ coming from elliptic curves up to various conductor bounds listed in \cref{TableGL22Ranks}. We exclude all elliptic curves with conductor divisible by a prime over $2$, and all those whose mod $2$ representation is not nearly-ordinary. In all cases, we use LMFDB data on elliptic curves, which are complete up to the range listed in the table. Acknowledgments for the sources of these data can be found on the LMFDB itself. In all cases, the rank of the nearly-ordinary Selmer group is positive. 
	\begin{figure}
		\renewcommand{\arraystretch}{1.5}
		\begin{tabular}{|c|c|c|c|c|c|}
			\hline 
			\raisebox{-0.6ex}[0pt][0pt]{Base} & \raisebox{-0.6ex}[0pt][0pt]{Conductor} & \raisebox{-0.6ex}[0pt][0pt]{Number of} & \multicolumn{3}{c|}{Mod $p$ Selmer rank}\\ \cline{4-6} \raisebox{0.6ex}[0pt][0pt]{field} & \raisebox{0.6ex}[0pt][0pt]{bound} & \raisebox{0.6ex}[0pt][0pt]{NO reps} & $\rel$ & $\no$ & $\unr$ \\ \hline 
			$\Q$ & $500{,}000$ & $38{,}497$ & $2.49$ & $1.52$ & $0.41$  \\ \hline 
			$\Q(\sqrt{-3})$ & $150{,}000$ & $2{,}373$ & $3.05$ & $1.46$ & $0.07$  \\ \hline 
			$\Q(\sqrt{-1})$ & $100{,}000$ & $1{,}055$ & $3.03$ & $1.92 $ & $0.06$  \\ \hline 
			$\Q(\sqrt{-7})$ & $50{,}000$ & $76$ & $4.12$ & $2.04$ & $0.08$  \\ \hline 
			$\Q(\sqrt{-2})$ & $50{,}000$ & $848$ & $3.04$ & $1.55$ & $0.06$  \\ \hline 
			$\Q(\sqrt{-11})$ & $50{,}000$ & $1{,}211$ & $3.06$ & $1.44$ & $0.08$ \\  \hline 
		\end{tabular}
		\caption{Average ranks of the relaxed, nearly-ordinary and unramified mod $2$ Selmer groups over various number fields.}\label{TableGL22Ranks}
	\end{figure}
	
	\subsubsection{Image $\SD_{16}$}\label{SectionSD16Stats}
	We compute the relaxed, nearly-ordinary and unramified ranks for Selmer groups associated to representations with image $\SD_{16}$, coming form elliptic curves over $\Q$, again with conductor up to $500{,}000$. In the table in \cref{TableSD16Ranks}, we collect data on the averages of these ranks. In all cases, the nearly-ordinary $\SD_{16}$ representation fixed two lines, leading to a ramified and unramified quotient. Per the table, the data indicate that the rank associated to the unramified quotient seems to be larger, on average, than that associated to the ramified quotient. 
	
	\begin{figure}
		\renewcommand{\arraystretch}{1.5}
		\begin{tabular}{|c|c|c|c|c|}
			\hline 
			Selmer group & rel & NO (unr.) & NO (ram.) & unr \\ \hline 
			Average rank & $1.597$ & $0.946$ & $0.763$ & $0.231$ \\ \hline 
		\end{tabular}
		\caption{Average ranks of the relaxed and unramified Selmer groups of nearly-ordinary $\SD_{16}$ representations over $\Q$, and ranks of the nearly-ordinary Selmer groups associated to ramified and unramified quotients.}\label{TableSD16Ranks}
	\end{figure}

	\section{Testing a Bloch-Kato-type relationship}\label{SectionBKTest}
	In this section, we examine how closely the mod $p$ nearly-ordinary Selmer group mimics an important property of its $p$-adic cousin, namely its relation to the Bloch-Kato Selmer group. As noted in the introduction, no ``mod $p$ Bloch-Kato Selmer group'' has yet been proposed, so we must make the comparison indirectly, using periods of mod $p$ cohomology classes. We do this using the $\SD_{16}$ representations computed in the previous section.  
	
	\subsection{Serre's conjecture}
	Serre's conjecture (now a theorem of Khare and Winterberger \cite{KhareWint} over $\Q$) relates mod $p$ Galois representations to cohomology classes. Specifically, if $\rho$ is an odd, absolutely irreducible, continuous two-dimensional mod $p$ representation of $G_\Q$, then there is an associated modular eigenform $f$ whose mod $p$ Galois representation is isomorphic to $\rho$. The strong form of Serre's conjecture attaches invariants to $\rho$ that determine exactly where to find this eigenform. The Serre conductor $N(\rho)$, a character $\chi$ of $\Z / N(\rho)\Z$, and a weight $k$, such that $f \in S_{k}(N,\chi)$. For more details on these constructions, see Serre \cite{Serre}. For all of the representations we consider here, the weight $k$ is always $2$ and the character $\chi$ is always trivial, which we will assume to simplify the discussion. We also assume that $f$ has real Hecke eigenvalues.
	
	\subsection{$L$-values and periods}
	The important quantity for mod $p$ representations is not the classical $L$-value $L(f,1)/\Omega_f$, but its ``mod $p$ reduction''. While the $L$-value is (conjecturally) transcendental, we can find a number $\Omega_f$, the \textit{least real period of $f$}, such that $L(f,1)/\Omega_f  \in \Q$. This period is determined by scaling the $q$-expansion of $f$ so the coefficient of $q$ is $1$; then the integrals $\int_{\tau}^{\gamma \cdot \tau} f(z)dz$ for all $\gamma \in \Gamma_0(N)$ form a lattice in $\C$, whose smallest real element is $\Omega_f$. 
	
	We can reduce the rational number $L(f,1)/\Omega_f$ modulo $p$ via $\frac{a}{b} \mapsto a b^{-1} \Mod{p}$, with $b^{-1}$ a multiplicative inverse of $b$ modulo $p$. We expect that the vanishing or not of this quantity should be related to the presence (or lack) of rank in a hypothetical mod $p$ Bloch-Kato Selmer group. 
	
	The mod $p$ reduction of $L(f,1)/\Omega_f$ can, in fact, be computed without the need for integration or evaluating an $L$-series. Using the multiple period polynomial approach of Pa\c{s}ol-Popa \cite{PasolPopa} in characteristic $3$, we can compute the $1$-dimensional space of weight $2$ multiple period polynomials of level $\Gamma_0(N(\rho))$ whose Hecke eigenvalues match the traces of Frobenius of $\rho$. Then a certain coefficient of this polynomial (coming from the identity coset of $\Gamma_0(N(\rho))$ in $\SL_2(\Z)$) gives the reduction of $L(f,1)/\Omega_f$ modulo $3$. 
	
	Working directly with mod $3$ period polynomials gives two advantages. First, we do not have to worry about picking out the precise eigenspace in characteristic $0$ reducing to the one we are looking for mod $3$---when the conductor $N(\rho)$ is equal to the conductor of the elliptic curve from which $\rho$ arises, this is easy, as it will be a $1$-dimensional space with integer Hecke eigenvalues, but when $N(\rho)$ is smaller, the associated form will have eigenvalues in some number field. Picking the correct conjugate and an ideal in the number field so that the reduction lines up with $\rho$ is doable, but unnecessary if we just work over $\F_3$ from the start. And second, calculations in characteristic $p$ are generally much quicker than in characteristic $0$, meaning we can compute periods more efficiently. 
	
	Finally, we note that it is not enough to just record the vanishing of $L$-values in characteristic $0$, as the rational number $L(f,1)/\Omega_f$ may have a $3$ in its numerator, so a representation $\rho$ coming from an elliptic curve with rank $0$ (and so non-zero $L$-value, subject to BSD) may still have its period equal to $0 \Mod{3}$. And more generally, for Serre's conjecture over other fields, it is not known that the mod $p$ cohomology class associated to a mod $p$ Galois representation will lift to a class in characteristic $0$. In that setting, the mod $p$ period may be the only robust piece of information one has.	
	
	\subsection{The calculation}
	For each $\rho$ in the dataset from \cref{SectionSD16Stats}, we compute the rank of the nearly-ordinary Selmer group with respect to the fixed line $\ell$ such that the quotient $\rho^*$ is unramified. We also compute the conductor of $\rho$ in order to find the period of its associated mod $p$ cohomology class. This allows us to test the proposed relationship between the rank and the period: that the vanishing of the period implies the non-vanishing of the rank, and vice versa, up to a possible overestimation of the rank by at most $1$. 
	
	Note, when the rank is $1$ we cannot tell if we are in the case of overestimating a rank of $0$ by $1$, where expect a non-zero period, or if there is no overestimation, with expected period $0$. So we are really testing the (slightly fuzzier) relationship 
	\begin{equation}\label{EquationFuzzierRelationship}
		\text{rank} = 0 \implies \text{period} \neq 0, \spc 
		\text{rank} \geq 2 \implies \text{period} = 0. 
	\end{equation}
	
	Of the $186$ representations we computed, $72$ have nearly-ordinary rank (with unramified quotient) not equal to $1$. To compute the periods for these representations, we use a combination of direct computation and LMFDB data. When the conductor of $\rho$ is equal to the conductor of the elliptic curve giving rise to the representation, we can find the rational value $L(E,1)/\Omega_E$ directly using data from the LMFDB. 
	
	In the case that the conductor of $\rho$ is less than that of $E$, there is a mod $3$ modular form with the correct Hecke eigenvalues at this smaller level, but it will be the reduction modulo $3$ of a non-rational modular form. For these periods, we use the extended period polynomials. 
	
	Of the $72$ representations with rank not equal to $1$, only $9$ have conductor smaller than their associated elliptic curves (these are the representations coming from the curves 
	\LMRefDB{https://www.lmfdb.org/EllipticCurve/Q/18605/c/1}{18605.c1}, \LMRefDB{https://www.lmfdb.org/EllipticCurve/Q/94178/bb/1}{94178.bb1}, \LMRefDB{https://www.lmfdb.org/EllipticCurve/Q/153760/d/1}{153760.d1}, \LMRefDB{https://www.lmfdb.org/EllipticCurve/Q/153760/f/1}{153760.f1}, \LMRefDB{https://www.lmfdb.org/EllipticCurve/Q/266450/d/1}{266450.d1}, \LMRefDB{https://www.lmfdb.org/EllipticCurve/Q/266450/i/1}{266450.i1}, \LMRefDB{https://www.lmfdb.org/EllipticCurve/Q/307520/d/1}{307520.d1}, \LMRefDB{https://www.lmfdb.org/EllipticCurve/Q/307520/v/1}{307520.v1}, \LMRefDB{https://www.lmfdb.org/EllipticCurve/Q/307520/w/1}{307520.w1}), whose periods we compute directly. 
	
	For all representations, the relationship \cref{EquationFuzzierRelationship} between ranks and periods is satisfied. 
	
	Of particular interest are cases such as the representation coming from the curve \LMRefDB{https://www.lmfdb.org/EllipticCurve/Q/453152/bq/1}{453152.bq1}, which has mod $3$ Selmer rank $2$, but $\mathrm{rank}(E) = 0$. The $L$-value is $L(E,1)/\Omega_E = 18$, with the factor of $9 = 3^2$ coming from the (analytic) rank of $\Sha(E/\Q)$, perhaps accounting for the rank of the mod $3$ Selmer group.

\end{document}